\documentclass[11pt,twoside]{article}
\usepackage{fancyhdr,color,graphicx,amsmath,amsfonts,amssymb,latexsym,bm,indentfirst,cite,amsthm,enumerate}
\usepackage[colorlinks=true,backref=page]{hyperref}
\usepackage[all]{xy}
\usepackage[a4paper,left=25mm,right=25mm,top=35mm,body={155mm,230mm}]{geometry}
\usepackage{pdfpages}
\parskip=2pt
\usepackage{titletoc}
\usepackage{fancyhdr}\pagestyle{fancy}
\fancyhead[OL,ER]{}

\newtheorem*{claim}{\hspace{2em} Claim}

\newtheorem{Theorem}{\hspace{2em}Theorem}[section]
\newtheorem{Definition}[Theorem]{\hspace{2em} Definition}
\newtheorem{Lemma}[Theorem]{\hspace{2em} Lemma}

\newtheorem{Proposition}[Theorem]{\hspace{2em} Proposition}


\newtheorem*{theoremm}{Main Theorem}


\setcounter{tocdepth}{2}
\makeatletter
\newcommand{\subsectionruninhead}{\@startsection{subsection}{2}{0mm}{-\baselineskip}{-0mm}{\bf\large}}
\newcommand{\subsubsectionruninhead}{\@startsection{subsubsection}{3}{0mm}{-\baselineskip}{-0mm}{\bf\normalsize}}
\makeatother
\begin{document}
	\newpage
	\title{Approximation property on entropies for surface diffeomorphisms}
	\author{Wanlou Wu \and Jiansong Liu}
	\date{}
	\maketitle

\begin{abstract}
	In this paper, we prove that for any $C^1$ surface diffeomorphism $f$ with positive topological entropy, there exists a diffeomorphism $g$ arbitrarily close (in the $C^1$ topology) to $f$ exhibiting a horseshoe $\Lambda$, such that the topological entropy of $g$ restricted on $\Lambda$ can arbitrarily approximate the topological entropy of $f$. This extends the Theorem \cite[Theorem 1.1]{Gan} of Gan. 
\end{abstract}

\section{Introduction}
	
    The entropy of a dynamical system is a non-negative real number which measures the complexity of the system. Modeled on the definition of the Kolmogorov-Sinai \cite{K1,S}, or metric entropy, Adler, Konheim and McAndrew \cite{AKM} first introduced the concept of topological entropy in 1965. Later, Rufus Bowen \cite[Definition 2]{BR} and Dinaburg \cite{DEI} gave a different, weaker definition of topological entropy imitating that of the Hausdorff dimension.
	
	In 1960s, Smale \cite{SS} found the horseshoe and created the hyperbolic theory. He proved that if a dynamical system has a hyperbolic periodic point with transversal homoclinic point, then there exists an invariant hyperbolic set such that the dynamics on it is topologically conjugate to a topological Markov chain. It means that the topological entropy of the original system is positive. In \cite[Corollary 4.3]{Ka}, Katok confirmed that if $f$ is a $C^{1+\alpha}(\alpha>0)$ surface diffeomorphism with positive topological entropy, then the diffeomorphism $f$ has a hyperbolic periodic point with a transversal homoclinic point. Gan \cite[Theorem 1.1]{Gan} proved that for any $C^1$ surface diffeomorphism $f$ with positive topological entropy, there exists a diffeomorphism $g$ which can arbitrarily close $f$ in the $C^1$ topology exhibiting a transversal homoclinic point. These papers describe the relationship between entropy and horseshoe in quality. It is natural to ask the following question: can we give some description about the relationship between entropy and horseshoe in quantity? Especially, in Gan's paper \cite{Gan}, does the entropy of the diffeomorphism $g$ and $f$ equal? Or what is the difference between the entropy of the diffeomorphism $g$ and $f$? In this paper, we give a partial answer to this question.     
	
	\bigskip
	
\begin{theoremm}\label{Thm:main}
    Let $f$ be a $C^1$ diffeomorphism on a compact two-dimensional $C^\infty$ Riemannian manifold $M$ without boundary. If $f$ has positive topological entropy, then for any $\varepsilon>0$ and any $C^1$ neighborhood $\mathcal{U}$ of $f$, there exists a diffeomorphism $g\in\mathcal{U}$ exhibiting a horseshoe $\Lambda$ such that $$\lvert h_{top}(g|_{\Lambda})-h_{top}(f)\rvert<\varepsilon,$$ where $h_{top}(f)$ is the topological entropy of $f$.
\end{theoremm}

	We introduce the outline of proof on the Main Theorem:
	
	According to the variational principle (Theorem \ref{VP}) and Ruelle's inequality (Theorem \ref{RI}), one can take an hyperbolic ergodic measure $\mu$ with positive metric entropy. For $\mu$-almost every point $p\in M$, by Ma\~n\'e's ergodic closing lemma (Lemma \ref{TECL}), there exists a sequence of diffeomorphisms $\{f_n\}$ and a sequence of points $\{p_n: p_n \text{ is the periodic point of } f_n\}$ such that $$\lim_{n\rightarrow\infty}f_n=f {\rm \ and \ }\lim_{n\rightarrow\infty}p_n=p.$$  Abdenur, Bonatti, and Crovisier \cite[Proposition 6.1]{ABC} proved that the Dirac measures along these periodic orbits converge weakly to $\mu$ and whose Lyapunov exponents converge to the Lyapunov exponents of $\mu$. For a periodic point $z$, let $$\mathcal{O}rb(f,z)\triangleq\{p,f(z),\cdots,f^{\pi(z)-1}(z)\},~\text{ where }\pi(z)\triangleq min\{n\in\mathbb{N}^+: f^{n}(z)=z\}.$$ We discuss two different cases.
	
\begin{itemize} 
	\item[---] If there is no uniformly dominated splitting over the set $\mathcal{O}rb(f_n, p_n)$ for all $n$, then we can assume that there exists no dominated splittings over $\mathcal{O}rb(f_n, p_n)$ for all $n$. According to Buzzi, Crovisier and Fisher \cite[Theorem 4.1]{BCF}, there exists a diffeomorphism $g$ containing a horseshoe $\Lambda$ with big topological entropy. Then, we can get the main result by the property of intermediate entropy of horseshoes.
	
    \item[---] If there exists a uniformly dominated splitting over $\mathcal{O}rb(f_n, p_n)$ for all $n$, then there is a dominated splitting over $supp(\mu)$. By applying a result from Gelfert\cite[Theorem 1]{GEL}, we show the quantitative relationship between the topological entropy and horseshoe.
\end{itemize}

\section{Entropy and Lyapunov exponents}
	
    Assume that $(M, \mathcal{A})$ is a measurable space, $f:M\rightarrow M$ is a measurable transformation preserving a probability measure. The set of all invariant measures and all ergodic measures are denoted by $\mathcal{M}(f)$ and $\mathcal{E}(f)$ respectively. Let $\xi=\{A_1,A_2,\cdots,A_k\},~\eta=\{B_1,B_2,\cdots,B_l\}$ be two finite measurable partitions of $M$, the union $\xi\bigvee\eta$ of $\xi$ and $\eta$ is defined as $$\xi \bigvee \eta=\{A_i \bigcap B_j: 1\leq i \leq k,~1\leq j \leq l\}.$$ The union $\xi\bigvee\eta$ is also a measurable partition of $M$. Given a natural number $n\geq 1$, $$\bigvee_{i=0}^{n-1}f^{-i}(\xi)\triangleq \xi\bigvee f^{-1}(\xi)\bigvee\cdots\bigvee f^{-n+1}(\xi).$$  
	
\begin{Definition}
	Given a measure $\mu\in\mathcal{M}(f)$ and a finite measurable partition $\xi=\{A_1,A_2,\cdots,A_k\}$ of $M$, the metric entropy of the measurable partition $\xi$ is defined as $$H_\mu(\xi)=-\sum_{i=1}^{k}\mu(A_i)\log\mu(A_i).$$ The metric entropy of transformation $f$ w.r.t. $\xi$ is defined as $$h_\mu(f,\xi)=\lim_{n\rightarrow\infty}\frac{1}{n}\log H_\mu(\bigvee_{i=0}^{n-1}f^{-i}(\xi)).$$ The metric entropy of transformation $f$ w.r.t. $\mu$ is defined as $$h_{\mu}(f)\triangleq\sup \{h_{\mu}(f,\xi): \xi \text{ is a finite measurable partition of M}\}.$$
\end{Definition}

\begin{Definition}
    Given an open cover $\xi$ of $M$, let $N(\bigvee_{i=0}^{n-1}f^{-i}(\xi))$ be the minimal cardinality of a subcover of $\bigvee_{i=0}^{n-1}f^{-i}(\xi)$. The entropy of transformation $f$ w.r.t. $\xi$ is defined as $$h(f,\xi)=\lim_{n\rightarrow\infty}\dfrac{1}{n}\log N(\bigvee_{i=0}^{n-1}f^{-i}(\xi)).$$ The topological entropy of transformation $f$ is defined as $$h_{top}(f)\triangleq \sup\{h(f,\xi): \xi \text{ is an open cover of $M$}\}.$$ 
\end{Definition}
	
	Topological entropy was actually discovered later than metric entropy. Metric entropy gives a quantitative measure of the complexity of a dynamical system as seen via an invariant measure. Topological entropy was found by extracting from the same concept an invariant of the topological dynamics only. The topological entropy measures the maximal dynamical complexity versus an average complexity reflected by metric entropy. Therefore, metric entropy is not greater than topological entropy and measures assigning most weight to regions of high complexity should have metric entropy close to the topological entropy. This is the famous variational principle.
	   
\begin{Theorem}\label{VP}
	(The Variational Principle, \cite[Theorem 8.6]{PW}) If $f$ is a continuous map on a compact metric space $M$, then $$ h_{top}(f)=\sup\{h_{\mu}(f): \mu\in\mathcal{M}(f)\}.$$ By the relationship between invariant measures and ergodic measures, one has that $$ h_{top}(f)=\sup\{h_{\mu}(f): \mu\in\mathcal{E}(f)\}.$$
\end{Theorem}  
	
    From now on, let $M^d$ be a $d$-dimensional compact Riemannian manifold and $f$ a diffeomorphism on $M^d$. We denote the tangent map of diffeomorphism $f$ at the point $x\in M^d$ by $Df_x$. Given a measure $\mu\in \mathcal{M}(f)$, Oseledec's theorem \cite[Theorem 3]{OV} affirms that for $\mu$-almost every $x\in M$, there exist real numbers $\lambda_{1}(x)<\lambda_{2}(x)<\cdots<\lambda_{s(x)}(x)$ and a splitting $T_{x}M=E_{1}(x)\oplus E_{2}(x)\oplus\cdots \oplus E_{s(x)}(x)$, where $s(x)\in(1,d]$ is a measurable function and  $s(x)\in\mathbb{N}$, such that $$Df_x(E_i(x))=E_i(f(x)), {\rm and }~ \lambda_i(x)=\lim_{n\rightarrow \pm\infty} \frac{1}{n} \log \parallel Df^{n}_{x}v\parallel,~{\rm for \ any} ~ v\in E_{i}(x)\setminus\{0\},~i\in[1,s(x)].$$ The numbers $\lambda_i(x)$ are called \emph{Lyapunov exponents} of $f$ at the point $x$ and the numbers $m_i(x)\triangleq$ dim$E_{i}(x)$ are called the \emph{multiplicity} of $\lambda_i(x)$. Especially, if the measure $\mu\in\mathcal{E}(f)$, then for $\mu$-almost every $x\in M$, one can get uniform constant $s=s(x)$ and uniform exponents $\lambda_i(\mu)=\lambda_i(x)$, $m_i(\mu)=m_i(x)$ for $i=1,2,\cdots,s$. We say an ergodic measure $\mu$ is a \emph{hyperbolic measure} if all Lyapunov exponents are nonzero. 
    
    Ruelle's inequality connects metric entropy and the sum of the positive Lyapunov exponents, and it offers a useful tool in proving the existence of measures with some exponents different from zero. If the topological entropy of a diffeomorphism is not zero, then there is a measure with some of its exponents positive.  
		
\begin{Theorem}\label{RI}
    (Ruelle's inequality \cite[Theorem 2]{RD})Given a diffeomorphism $f\in$ {\em Diff}$^{1}(M)$ and a measure $\mu\in \mathcal{M}(f)$, let $\lambda_i(x),~i=1,2,\cdots,s(x)$, be the Lyapunov exponents of $f$ at the point $x$. Then $$h_{\mu}(f)\leq \int\lambda^{+}(x)d\mu,$$ where $\lambda^{+}(x)=\sum\limits_{\lambda_i(x)> 0} m_i(x)\lambda_i(x)$ and $m_i(x)$ is the multiplicity of $\lambda_i(x)$. Especially if the measure $\mu\in\mathcal{E}(f)$, then $$h_{\mu}(f)\leq \lambda^{+}(\mu),~\text{where $\lambda^{+}(\mu)=\sum\limits_{\lambda_i(\mu)> 0} m_i(\mu)\lambda_i(\mu)$}.$$
\end{Theorem}
	
\section{Horseshoe and Symbolic Dynamical System}
	
	Given a natural number $N\geq 2$, let $$\Sigma_{N}=\{\omega=(\cdots,\omega_{-1},\omega_0,\omega_1,\cdots): \omega_i \in\{0,1,\cdots,N-1\}, \text{ for $i\in\mathbb{Z}$} \}$$ be the space of two-sided sequences of $N$ symbols and $$\Sigma_N^1=\{\omega=(\omega_0,\omega_1,\cdots): \omega_i \in\{0,1,\cdots,N-1\}, \text{ for $i\in\mathbb{N}$} \}$$ the space of one-sided sequences of $N$ symbols. One can define a topology by noting that $\Sigma_N$ is the direct product of $\mathbb{Z}$ copies of the finite set $\{0,1,\cdots,N-1\}$, each with the discrete topoplogy, and using the product topology. The left shift $\sigma_N$ in $\Sigma_N$ is defined as: $$\sigma_{N}:\Sigma_N\rightarrow \Sigma_N,~~~~\sigma_N(\omega)=\omega'=(\cdots,\omega'_{-1},\omega'_0,\omega'_1,\cdots),$$ where $\omega'_i=\omega_{i+1}$ for every $i\in\mathbb{Z}$. The left shift $\sigma_N$ is a one-to-one map. Thus it is a homeomorphism of $\Sigma_N$. Sometimes the left shift $\sigma_N$ is also called a \emph{topological Bernoulli shift}.
	
	Let $M$ be a compact Riemannian manifold, $f:M\rightarrow M$ a $C^1$ diffeomorphism on $M$ and Diff$^1(M)$ the space of $C^1$ diffeomorphism on $M$. A subset $\Lambda\subset M$ is called an $f$-invariant set if $f(\Lambda)\subset\Lambda$. An $f$-invariant set $\Lambda$ is said to be a \emph{hyperbolic set} if there are two constants $C>0$, $\lambda\in(0,1)$ and an $Df$-invariant splitting $T_{\Lambda} M=E\bigoplus F$, such that $$\parallel Df^n\mid_{E(x)}\parallel\leq C\lambda^n,~~\parallel Df^{-n}\mid_{F(x)}\parallel\leq C\lambda^n,~\text{for any $x\in\Lambda$ and any $n\in\mathbb{N}$}.$$   
	
	Let $M_1,~M_2$ be the compact spaces, $f: M_1\rightarrow M_1$, $g: M_2\rightarrow M_2$ the homeomorphisms of compact spaces. The homeomorphism $f$ is called \emph{topologically conjugate} to the homeomorphism $g$, if there exists a homeomorphism $h: M_1\rightarrow M_2$ such that $ h\circ f=g\circ h$. 
	
\begin{Definition}\label{Horseshoe}
	Given a diffeomorphism $f\in$ {\em Diff}$^{1}(M)$, an $f$-invariant hyperbolic set $\Lambda$ is called a horseshoe of $f$, if the restriction of $f$ into $\Lambda$ is topologically conjugate to a subshift of finite type and the topological entropy $h_{top}(f|_{\Lambda})> 0$.
\end{Definition}

   From Peter Walters \cite[Theorem 4.28, Remark]{PW}, for a symbolic system $\Sigma_N$, the left shift $\sigma_N$ has invariant measures of arbitrary intermediate entropies. By \cite[Theorem 1.12]{PW}, one has that the left shift $\sigma_N$ is ergodic. Thus, for a symbolic system $\Sigma_N$, all possible entropies of ergodic measures form an interval $(0,\log N]$. The following Lemma \ref{SX} gives a similar result that all possible topological entropies of subshifts of $\Sigma_N$ also form an interval $ (0,\log N]$.
  
\begin{Lemma}\label{SX}(\cite[Page178-179]{PW})
    Given a natural number $N\geq 2$, for any real number $\alpha\in (0,\log N]$, there exists a compact $\sigma_N$-invariant set $\Sigma(\alpha)\subset\Sigma_N$ such that $$ h_{top}(\sigma_N|_{\Sigma(\alpha)})=\alpha.$$
\end{Lemma}
  
\begin{Lemma}\label{SH}
    Assume that a hyperbolic set $\Lambda\subset M$ is a horseshoe of $f$. Then for any $\varepsilon>0$ and $\alpha\in (0,h_{top}(f|_{\Lambda})]$, there exists an $f$-invariant compact set $\Lambda(\alpha)\subset \Lambda$ such that
  	
  \begin{enumerate}
  	\item[(1)] $\Lambda(\alpha)$ is a horseshoe of $f$;
  		
  	\item[(2)] $\lvert h_{top}(f|_{\Lambda(\alpha)})-\alpha\lvert <\varepsilon.$ 
  \end{enumerate} 
  	
\end{Lemma}

\begin{proof}
    Since $\Lambda\subset M$ is a horseshoe of $f$, by Definition \ref{Horseshoe}, there is a $N$ such that $f$ restricted on $\Lambda$ is topologically conjugate to a subshift of $N$ symbols. Consequently, there exists a homeomorphism $h:\Lambda\rightarrow \Sigma_N$ satisfying that $$ h\circ f|_{\Lambda}=\sigma_N\circ h,$$ where $ \sigma_N $ is a topological Bernoulli shift on $\Sigma_N$. Since topological entropy is a topological conjugate invariant, one has that $$ h_{top}(f|_{\Lambda})=h_{top}(\sigma_N)= \log N.$$ Then for any $\alpha\in (0,h_{top}(f|_{\Lambda})]$, according to Lemma \ref{SX}, there exists a compact $\sigma_N$-invariant set $\Sigma(\alpha)\subset\Sigma_N$ such that $$ h_{top}(\sigma_N|_{\Sigma(\alpha)})=\alpha.$$ 
    
    From Peter Walters \cite[Theorem 1.12 and Theorem 4.28, Remark]{PW}, there is an ergodic measure $\mu$ of left shift $\sigma_N$ such that $$h_{\mu}(\sigma_N)=h_{top}(\sigma_N|_{\Sigma(\alpha)}).$$ 
    
    Let $\nu=h_{*}\mu$, one can get that $\nu$ is a hyperbolic ergodic measure of $f$. Thus, $$h_{\nu}(f)=h_{\mu}(\sigma_N)=h_{top}(\sigma_N|_{\Sigma(\alpha)})=\alpha.$$ Then for any $\varepsilon>0$, there is a horseshoe $\Lambda(\alpha)\subset h^{-1}(\Sigma(\alpha)) \subset \Lambda $ of $f$ such that $$h_{\nu}(f)-\varepsilon < h_{top}(f|_{\Lambda(\alpha)}).$$ Therefore, $$\lvert h_{top}(f|_{\Lambda(\alpha)})-\alpha\lvert <\varepsilon.$$ 
\end{proof}

\section{Approximation Properties and Dominated Splitting}

   In this section, we introduce some approximation properties which provide fundamental ways to the heart of the proof of the Main Theorem. The first is due to Ma\~n\'e \cite[Theorem A]{MR}. 
       
\begin{Definition}
    A point $x\in M$ is called a strongly closable point of diffeomorphism $f\in$ {\rm Diff$^1(M)$}, if for any given $C^1$ neighborhood $\mathcal{U}$ of $f$ and $\varepsilon>0$, there is a diffeomorphism $g\in\mathcal{U}$ and a periodic point $y\in M$ of diffeomorphism $g$, such that 
    \begin{itemize} 
	  \item $g=f$ on $M\setminus\bigcup_{i\in\mathbb{Z}}B(f^i(x),\varepsilon)$;
		
	  \item $d(f^j(x),g^j(y))\leq\varepsilon$, for all $0\leq j\leq Per(y)$.	
	\end{itemize}	
\end{Definition}

    Let $\Sigma(f)$ be the set of strongly closable points of diffeomorphism $f$. Ma\~n\'e's ergodic closing lemma \cite[Theorem A]{MR} asserts that the set $\Sigma(f)$ has full measure w.r.t. any invariant probability measure.   

\begin{Lemma}\label{TECL}(Ma\~n\'e's ergodic closing lemma \cite[Theorem A]{MR})
	For any diffeomorphism $f\in$ {\rm Diff$^1(M)$} on a compact Riemannian manifold $M$, the set $\Sigma(f)$ has full probability measure for diffeomorphism $f$. 
\end{Lemma}

    For a Borel probability measure $\mu$, let $supp(\mu)$ be the support set of $\mu$. We say that a point $x\in supp(\mu)$, if for any $r>0$, the non-empty open set $B(x,r)$ has positive measure. The following Proposition \ref{SMC} certifies the relationship between $supp(\mu)$ and those of $\{\mu_n\}$ which approaches to the measure $\mu$.  

\begin{Proposition}\label{SMC}
	Let $\{\mu_n\}$ be a sequence of Borel probability measure on a compact metric space $M$. If $\mu_n\rightarrow \mu$ in the weak$^*$ topology sense, then one has that $$supp(\mu)\subset \bigcap_{m\geq 1}\overline{\bigcup_{n\geq m}supp(\mu_n)}.$$
\end{Proposition}

\begin{proof}
	By the definition of the support set of a measure, for every $x\in supp(\mu)$ and every $k\in \mathbb{N}$, there exists $\delta>0$ such that $$\mu(B(x,\frac{1}{k}))>\delta>0,$$ where $B(x,\frac{1}{k})$ is an open ball whose centre is $x$ and radius is $\frac{1}{k}$. Since $\mu_n\rightarrow \mu$, by the property that the limit inferior of $\{ \mu_n(E)\}$ is not less than $\mu(E)$ for any open set $E$ when $\mu_n\rightarrow \mu$, one has that $$\liminf_{n\rightarrow \infty} \mu_{n}(B(x,\frac{1}{k})) \geq \mu(B(x,\frac{1}{k})).$$  It means that $\liminf\limits_{n\rightarrow \infty} \mu_{n}(B(x,\frac{1}{k}))>\delta>0.$ Therefore, there exists a constant $N\in\mathbb{N}^+$ such that 
	$$\mu_{n}(B(x,\frac{1}{k}))>\frac{\delta}{2}>0,~{\rm for~ any}~n\geq N.$$ Consequently, there is a sequence $\{y_n\triangleq y(k,n)\}$ satisfying that $$y(k,n)\in supp(\mu_{n})\bigcap B(x,\frac{1}{k}),~{\rm for~ any}~n\geq N.$$ Let $k\rightarrow \infty$, one can choose a subsequence $\{y(k,n_k)\}$ such that $n_{k}\rightarrow \infty$ and $ y(k,n_{k})\rightarrow x$. Thus, for any $m\in \mathbb{N}$, one has that $$x\in \overline{\bigcup_{n\geq m}supp(\mu_n)}.$$ Then, $x\in \bigcap_{m\geq 1}\overline{\bigcup_{n\geq m}supp(\mu_n)}$. Thus, the proposition is proved.
\end{proof}

    Lyapunov exponents play a key role in understanding the ergodic behavior of a dynamical system. Given a measure $\mu\in \mathcal{E}(f)$ on a $d$-dimensional compact manifold, by Oseledec's theorem \cite[Theorem 3]{OV}, one can get $d$-Lyapunov exponents $\lambda_1(\mu)\leq\lambda_2(\mu)\leq\cdots\leq\lambda_d(\mu)$. Denoted by $L(\mu)\in \mathbb{R}^d$ the $d$-dimensional vector of the Lyapunov exponents of $\mu$, with multiplicity, endowed with an increasing order.
     
    As we know, periodic points play an important role in the study of the dynamics of diffeomorphisms. For a periodic orbit, one also has the $d$-dimensional vector of Lyapunov exponents. Abdenur, Bonatti and Crovisier \cite[Proposition 6.1]{ABC} proved that the Lyapunov exponents of an ergodic invariant measure can be approximated by those of periodic orbits.
  
\begin{Theorem}\label{NHGD}(\cite[Proposition 6.1]{ABC})
    Let $\mu$ be an ergodic invariant measure of a diffeomorphism $f$ on a compact manifold $M$. Fix a $C^1$ neighborhood $\mathcal{U}$ of $f$, a neighborhood $\mathcal{V}$ of $\mu$ in the space of probability measures with the weak topology and a neighborhood $\mathcal{O}$ of $L(\mu)$ in $\mathbb{R}^d$, there is $g\in \mathcal{U}$ and a periodic orbit $\gamma$ of $g$ such that the Dirac measure $\mu_{\gamma}$ associated to $\gamma$ belongs to $\mathcal{V}$, and its Lyapunov vector $L(\mu_{\gamma})$ belongs to $\mathcal{O}$.
\end{Theorem}

   Given an $f$-invariant compact set $\Lambda\subset M$, we say that there is a \emph{dominated splitting} over $\Lambda$ if there exist two constants $C>0$, $\lambda\in(0,1)$ and an $Df$-invariant splitting $T_\Lambda M=E\bigoplus F$ such that $$\frac{\parallel Df^n\mid_{E(x)}\parallel}{m(Df^n\mid_{F(x)})}\leqslant C\lambda^{n},~\text{for any $x\in\Lambda$ and any $n\in\mathbb{N}$}.$$  Especially if dim$F = 1$, then one has that 
   $$\frac{\parallel Df^n\mid_{E(x)}\parallel}{\parallel Df^n\mid_{F(x)}\parallel}=\frac{\parallel Df^n\mid_{E(x)}\parallel}{m(Df^n\mid_{F(x)})}\leqslant C\lambda^{n},~\text{for any $x\in\Lambda$ and any $n\in\mathbb{N}$}.$$
   
\begin{Definition}\label{NDS}
   Let $\{f_n\}$ be a sequence of diffeomorphisms of a compact manifold $M$, $\{p_n: p_n \text{ is a periodic point of } f_n\}$ a sequence of points. We say that there is a uniformly dominated splitting over $\mathcal{O}rb(f_n, p_n)$ for all $n$, if there exist two uniform constants $C>0$ and $\lambda\in(0,1)$ such that for any $n\in\mathbb{N}$ and an $Df_n$-invariant splitting $T_{\mathcal{O}rb(f_n, p_n)}M=E\bigoplus F$, one has that
   $$\frac{\parallel Df_n^k\mid_{E(x)}\parallel}{m(Df_n^k\mid_{F(x)})}\leqslant C\lambda^{k},~\text{for any $x\in \mathcal{O}rb(f_n, p_n)$ and any $k\in\mathbb{N}^+$}.$$
\end{Definition}
   
   In \cite[Theorem 11]{KA1}, Katok established a well-known result about the relationship between metric entropy and horseshoe in quantity for $C^{1+\alpha}(\alpha>0)$ diffeomorphisms. Now, we consider a $C^1$ diffeomorphism preserving a hyperbolic ergodic measure with positive measure entropy. The following Theorem \ref{HHS} given by Gelfert says that if the support set admits a dominated splitting, then one can get a similar result about the quantitative relation between metric entropy and horseshoe. 
   
\begin{Theorem}\label{HHS}(\cite[Theorem 1]{GEL})
   Let $\mu$ be an ergodic measure of a $C^1$ diffeomorphism $f$ with $h_{\mu}(f)>0$ on a compact Riemannian surface $M$. If there is a dominated splitting over $supp(\mu)$, then for any $\varepsilon>0$, there exists a hyperbolic horseshoe $\Lambda$ such that 
   $$h_{\mu}(f)-\varepsilon< h_{top}(f|_{\Lambda}),$$ where $\Lambda$ is contained in a $\varepsilon$-neighborhood of $supp(\mu)$.  
\end{Theorem}

   A classical construction due to Newhouse \cite{NW} creates horseshoes from hyperbolic periodic orbits with large period and weak domination through local $C^1$-perturbations. When there is no dominated splitting, the following Theorem \ref{EDWDS}  given by Buzzi, Crovisier and Fisher says that there is a horseshoe with entropy arbitrarily close to an upper bound following from Ruelle's inequality (Theorem \ref{RI}). Let us consider the case where the dimension of the manifold is two.

\begin{Theorem}\label{EDWDS} (\cite[Theorem 4.1]{BCF})
   For any $C^{1}$ diffeomorphism $f$ of a compact two-dimensional manifold $M$ and any $C^1$ neighborhood $\mathcal{U}$ of $f$, there exists $T\geq1$ with the following property. If $p$ is a periodic point of $f$ with period at least $T$ and whose orbit has no dominated splitting, then there exists $g\in\mathcal{U}$ containing a horseshoe $\Lambda$ such that $$h_{top}(g|_{\Lambda})\geq min\{\lambda^{+}(p),\lambda^{-}(p)\},$$ where $\lambda^{+}(p)$ and $-\lambda^{-}(p)$ are the non-negative and non-positive Lyapunov exponents of $f$ at the point $p$ respectively. 
\end{Theorem}

\section{Proof of the Main Theorem}
       
   In this section, we give the proof of the Main Theorem. Hereafter, we always assume that dim$M=2$. Let $\delta_x$ be the atomic measure at $x$. 
    
\begin{proof}[Proof of the Main Theorem]
   For the $C^1$ diffeomorphism $f$ on the compact Riemannian surface $M$ with positive topological entropy, by the variational principle (Theorem \ref{VP}), $$h_{top}(f)=\sup\{h_{\mu}(f): \mu\in\mathcal{E}(f)\},$$ one can find an ergodic measure $\mu$ such that $h_{\mu}(f)>0$. Since $ h_{\mu}(f)=h_{\mu}(f^{-1}) $ and Ruelle's inequality(Theorem \ref{RI}), one has that $$h_{\mu}(f)\leq min \{\lambda^{+}(\mu), \lambda^{-}(\mu)\},$$ where $\lambda^{+}(\mu)$ and $-\lambda^{-}(\mu)$ are the positive exponent and negative exponent of $ f $ respectively.
   Noting that dim$M=2$, then all the Lyapunov exponents of $f$ w.r.t. the measure $\mu$ are different from zero. Therefore, the ergodic invariant measure $\mu$ is hyperbolic.

   Since $\mu$ is ergodic, one can obtain that the set $A\triangleq\{x\in M: \lim\limits_{n\rightarrow \infty}\frac{1}{n} \sum\limits_{i=0}^{n-1}\delta_{f^{i}x}=\mu\}$ is a full measurable set. According to Ma\~n\'e's ergodic closing lemma (Lemma \ref{TECL}), one has that $$\mu(A\cap\Sigma(f))=1.$$ It means that $\mu$-alomost every $x\in A\cap\Sigma(f)$ is a strongly closable point. Choose a point $p\in A\cap\Sigma(f)$, taking $\varepsilon=\dfrac{1}{n}$ successively, by Theorem \ref{NHGD}, there is a sequence of diffeomorphisms $\{f_n\}$ and a sequence of points $\{p_n: p_n \text{ is the periodic point of } f_n \}$, such that $$f(y)=f_{n}(y), {\rm \ for \ every \ } y \in M\setminus\bigcup_{i\in\mathbb{Z}}B(f^i(p),\frac{1}{n}), {\rm \ and }$$ $$d(f^j(p),f_{n}^j(p_{n}))\leq \frac{1}{n}, ~{\rm for\ all \ } 0\leq j\leq \pi(p_n).$$ According to Theorem \ref{NHGD}, one also has that $$\lim_{n\rightarrow\infty}\lambda^{+}(p_n) = \lambda^{+}(\mu) {\rm \ \ \ and \ \ \ } \lim_{n\rightarrow\infty}\lambda^{-}(p_n)\rightarrow \lambda^{-}(\mu),$$ where $\lambda^{+}(p_n)$ and $-\lambda^{-}(p_n)$ are the positive and negative Lyapunov exponents of $f_n$ at the point $p_n$ respectively. We claim that one can choose the point $p$ which is nonperiodic. Otherwise, it would contradict that $f$ has positive metric entropy. Since $f_n\rightarrow f$, $p_n\rightarrow p$ and the continuity of diffeomorphsm $f$, $$\pi(p_n)\rightarrow \infty {\rm \ as \ }n\rightarrow\infty.$$

\begin{claim}
   Choose the nonperiodic point $p$, for the sequence of diffeomorphisms $\{f_{n}\}$ and the sequence of points $\{p_{n}\}$, one has that 
   $$\mu_{n}=\frac{1}{\pi(p_n)}\sum\limits^{\pi(p_n)-1}_{i=0}\delta_{f_{n}^{i}(p_{n})}\rightarrow \mu \text{ as $n \rightarrow \infty$}.$$	
\end{claim}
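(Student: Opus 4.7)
The plan is to compare the empirical measures $\mu_n$ along the orbits of $p_n$ under $f_n$ with the auxiliary empirical measures
\[
\nu_n \;\triangleq\; \frac{1}{\pi(p_n)} \sum_{i=0}^{\pi(p_n)-1} \delta_{f^i(p)}
\]
taken along the $f$-orbit of the base point $p$ for the same length $\pi(p_n)$. The strategy is to show separately that $\nu_n \to \mu$ and that $\mu_n - \nu_n \to 0$ in the weak$^*$ sense, and then combine.

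First I would verify that $\nu_n \to \mu$. Since $p \in A$, by definition $\frac{1}{N}\sum_{i=0}^{N-1}\delta_{f^i(p)}$ converges weakly to $\mu$ as $N \to \infty$. The choice of $p$ as a nonperiodic point forces $\pi(p_n) \to \infty$ (as already observed in the text: otherwise one of the $p_n$ would coincide with $p$ up to a short orbit, contradicting the positive entropy of $\mu$ via the continuity of $f$ and $f_n\to f$). Substituting $N = \pi(p_n)$ into the Birkhoff convergence, we obtain $\nu_n \to \mu$ weakly.

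Next I would estimate $\mu_n - \nu_n$ via uniform continuity. Fix a continuous function $\varphi \in C(M,\mathbb{R})$. Since $M$ is compact, $\varphi$ is uniformly continuous, so for any $\varepsilon>0$ there is $\delta>0$ with $|\varphi(x)-\varphi(y)| < \varepsilon$ whenever $d(x,y)<\delta$. By Mañé's closing lemma and our choice of $\{f_n\}$, we have the uniform shadowing estimate $d(f^j(p), f_n^j(p_n)) \leq \tfrac{1}{n}$ for every $0 \leq j \leq \pi(p_n)$. Once $\tfrac{1}{n}<\delta$, this yields
\[
\left| \int \varphi\, d\mu_n - \int \varphi\, d\nu_n \right| \;\leq\; \frac{1}{\pi(p_n)} \sum_{i=0}^{\pi(p_n)-1} \bigl|\varphi(f_n^i(p_n)) - \varphi(f^i(p))\bigr| \;<\; \varepsilon.
\]
Combining this with $\int \varphi\, d\nu_n \to \int \varphi\, d\mu$, the triangle inequality gives $\int \varphi\, d\mu_n \to \int \varphi\, d\mu$, and since $\varphi$ was arbitrary we conclude $\mu_n \to \mu$ in the weak$^*$ topology.

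The main obstacle is a conceptual one rather than a computational one: the shadowing bound provided by the strong closability of $p$ must be valid along the \emph{entire} period of $p_n$, not merely up to a time fixed in advance. This is precisely the content of Mañé's ergodic closing lemma, which is why the length $\pi(p_n)$ and the error $\tfrac{1}{n}$ can be decoupled. Once this is in hand, the rest is a standard uniform-continuity argument, and no further use of hyperbolicity or dominated splittings is required at this stage.
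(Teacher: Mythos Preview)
Your argument is correct and follows essentially the same route as the paper: introduce the auxiliary empirical measures along the $f$-orbit of $p$, use $p\in A$ and $\pi(p_n)\to\infty$ to get their convergence to $\mu$, and then compare with $\mu_n$ via the shadowing bound $d(f^j(p),f_n^j(p_n))\le 1/n$ together with uniform continuity of the test function. If anything, your justification of the comparison step is slightly more explicit than the paper's, which simply appeals to ``$f_n\to f$, $p_n\to p$ and $g$ continuous'' without singling out the shadowing estimate.
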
   
   
\begin{proof}(Proof of claim)
   By Birkhoff ergodic theorem, for $\mu$-almost every $x\in M$ and any continuous function $g:M\rightarrow\mathbb{R}$, one has that $$\lim_{j\rightarrow \infty}\frac{1}{j}\sum^{j-1}_{i=0}g(f^{i}(x))= \int gd\mu.$$ Then, for any $\varepsilon>0$, there exists an integer $N_{1}>0$ such that $$|\frac{1}{j}\sum^{j-1}_{i=0}g(f^{i}(p))-\int gd\mu|<\frac{\varepsilon}{2},~~\text{for any}~j\geq N_{1}.$$ Since $\pi(p_n)\rightarrow \infty$, there exists an integer $N_{2}>0$ such that $\pi(p_{n})> N_{1}$ for any $n\geq N_{2}$. Therefore, $$\lvert\frac{1}{\pi(p_{n})}\sum^{\pi(p_{n})-1}_{i=0}g(f^{i}(p))-\int gd\mu\rvert<\frac{\varepsilon}{2},~~\text{for any}~n\geq N_{2}.$$ Since $f_n\rightarrow f$, $p_n\rightarrow p$ and $g$ is a continuous function, there exists an integer $N_{3}>0$ such that $$\lvert\frac{1}{\pi(p_{n})}\sum^{\pi(p_{n})-1}_{i=0}g(f^{i}_{n}(p_{n}))-\frac{1}{\pi(p_{n})}\sum^{\pi(p_{n})-1}_{i=0}g(f^{i}(p))\rvert<\frac{\varepsilon}{2},~~\text{for any }n\geq N_{3}.$$ 
   Therefore, taking an integer $N=max \{N_{2}, N_{3}\}$, for any $n\geq N$, one has that
\begin{eqnarray*}
	\lvert\int gd\mu_{n}-\int gd\mu\rvert &=& \lvert\frac{1}{\pi(p_{n})}\sum^{\pi(p_{n})-1}_{i=0}\int gd\delta_{f_n^i(p_n)}-\int gd\mu\rvert =\lvert\frac{1}{\pi(p_{n})}\sum^{\pi(p_{n})-1}_{i=0}g(f^{i}_{n}(p_{n}))- \int gd\mu\rvert\\ &\leq& \lvert\frac{1}{\pi(p_{n})}\sum^{\pi(p_{n})-1}_{i=0}g(f^{i}_{n}(p_{n}))-\frac{1}{\pi(p_{n})}\sum^{\pi(p_{n})-1}_{i=0}g(f^{i}(p))\rvert \\&+& \lvert\frac{1}{\pi(p_{n})}\sum^{\pi(p_{n})-1}_{i=0}g(f^{i}(p))-\int gd\mu\rvert<\frac{\varepsilon}{2}+\frac{\varepsilon}{2}=\varepsilon.
\end{eqnarray*}   
   Thus, $\mu_n\rightarrow \mu$ as $n \rightarrow \infty$ in the weak$^*$ topology sense.	
\end{proof}  
    
  Now we consider the following two different cases:

  {\textbf{Case 1:}} There is no uniformly dominated splitting over $\mathcal{O}rb(f_n, p_n)$ for all $n$, then one can take a subsequence $\{p_{n_k}\}$ of $\{p_n\}$ such that there are no dominated splittings over $\mathcal{O}rb(f_n, p_n)$ for all $k$. For discussion purposes, we assume that there exists no dominated splittings over $\mathcal{O}rb(f_n, p_n)$ for all $n$.
  
  By the variational principle (Theorem \ref{VP}), for any $\varepsilon>0$ there is an ergodic hyperbolic measure $\mu$ such that $$h_{\mu}(f)> h_{top}(f)-\varepsilon.$$ Fixed the neighborhood $\mathcal{U}$ of $f$, for $p_n\in M$ and any $C^{1}$ neighborhood $\mathcal{U}_n\subset\mathcal{U}$ of $f_n$, by Theorem \ref{EDWDS}, there exists $g\in\mathcal{U}_n$ containing a horseshoe $\Lambda$ such that $$h_{top}(g|_{\Lambda})\geq min \{\lambda^{+}(p_n), \lambda^{-}(p_n)\}.$$ Since $\mathcal{U}_n\in\mathcal{U}$, one has that $$g\in\mathcal{U}_n\subset\mathcal{U}.$$ As $\lambda^{+}(p_n)\rightarrow \lambda^{+}(\mu)$ and $\lambda^{-}(p_n)\rightarrow \lambda^{-}(\mu)$ when $n \rightarrow \infty$, one can take $n$ large enough such that $h_{top}(g|_{\Lambda})$ can arbitrarily close $min \{\lambda^{+}(\mu), \lambda^{-}(\mu)\}$. Since $h_{\mu}(f)\leq min \{\lambda^{+}(\mu), \lambda^{-}(\mu)\}$, one has that  $$h_{top}(g|_{\Lambda}) \geq h_{\mu}(f)>0.$$ Consequently, $$h_{top}(g|_{\Lambda}) \geq h_{\mu}(f)> h_{top}(f)-\varepsilon.$$
  
  For proving our goal, we discuss two different conditions.
  
  $(\uppercase\expandafter{\romannumeral1})$ If for any $\varepsilon>0$,  one has that $h_{top}(g|_{\Lambda}) <h_{\mu}(f)+\varepsilon$, then 
  $$h_{top}(g|_{\Lambda}) <h_{top}(f)+\varepsilon.$$ Therefore, the result follows, i.e. there exists a diffeomorphism $g\in\mathcal{U}$ exhibiting a horseshoe $\Lambda$ such that $$\lvert h_{top}(g|_{\Lambda})-h_{top}(f)\rvert<\varepsilon,~\text{for any $\varepsilon>0$}.$$ 
  
  $(\uppercase\expandafter{\romannumeral2})$ If there exists a real number $c>0$ such that $h_{\mu}(f)+c < h_{top}(g|_{\Lambda})$, then $$ h_{top}(g|_{\Lambda})>h_{top}(f)+c-\varepsilon \triangleq h_{top}(f)+c_{0}, $$ where $ c_{0}=c-\varepsilon $ is a positive number. Thus, $$h_{top}(f)\in (0, h_{top}(g|_{\Lambda})].$$ By Lemma \ref{SH}, there exists a horseshoe $\Lambda^{\ast}\triangleq \Lambda(h_{top}(f))\subset \Lambda$ such that $$\lvert h_{top}(g|_{\Lambda^{\ast}})-h_{top}(f)\lvert <\varepsilon,~\text{for any $\varepsilon>0$}.$$ Consequently, for any $\varepsilon>0$ there exists $g\in\mathcal{U}$ exhibiting a horseshoe $\Lambda^{\ast}$ such that 
  $$\lvert h_{top}(g|_{\Lambda^{\ast}})-h_{top}(f)\rvert<\varepsilon.$$ 
  
  {\textbf{Case 2.}} There exists a uniformly dominated splitting over $\mathcal{O}rb(f_n, p_n)$ for all $n$.

\begin{claim}
   The support set $supp(\mu)$ admits a dominated splitting.
\end{claim}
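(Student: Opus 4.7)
The plan is to construct the desired dominated splitting on $supp(\mu)$ by taking Grassmannian accumulation points of the splittings $E_n\oplus F_n$ over $\Lambda_n:=\mathcal{O}rb(f_n,p_n)$, and then using the uniformity of the constants $C>0$, $\lambda\in(0,1)$ together with the $C^1$ convergence $f_n\to f$ to pass the domination inequality to the limit. Since $\dim M=2$ and each splitting $E_n\oplus F_n$ is nontrivial, one automatically has $\dim E_n=\dim F_n=1$.

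First I would combine the previous claim ($\mu_n\to\mu$ weakly) with Proposition \ref{SMC} to obtain $supp(\mu)\subset\bigcap_m\overline{\bigcup_{n\geq m}\Lambda_n}$, so that every $x\in supp(\mu)$ is the limit of some sequence $y_{n_k}\in\Lambda_{n_k}$ with $n_k\to\infty$. Using a local trivialization of $TM$ near $x$ together with compactness of the one-dimensional Grassmannian of $T_xM$, I would extract a further subsequence (still indexed by $n_k$) such that $E_{n_k}(y_{n_k})\to\tilde E(x)$ and $F_{n_k}(y_{n_k})\to\tilde F(x)$.

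Next, I would check transversality and pass the inequality. If $\tilde E(x)=\tilde F(x)$, then picking unit vectors $v_{n_k}\in E_{n_k}(y_{n_k})$ and $w_{n_k}\in F_{n_k}(y_{n_k})$ that both converge to a common unit vector $v$ and using $f_n\to f$ in $C^1$ uniformly on a neighborhood of $x$, the ratio $\|Df_{n_k}^{k_0}v_{n_k}\|/\|Df_{n_k}^{k_0}w_{n_k}\|\leq C\lambda^{k_0}$ would tend to $1$, contradicting $C\lambda^{k_0}<1$ for $k_0$ large. The same type of limit argument directly yields
$$\frac{\|Df^k|_{\tilde E(x)}\|}{m(Df^k|_{\tilde F(x)})}\leq C\lambda^k,\qquad k\geq 1.$$
The invariance $Df(\tilde E(x))=\tilde E(f(x))$ then follows from $Df_{n_k}(E_{n_k}(y_{n_k}))=E_{n_k}(f_{n_k}(y_{n_k}))$, $f_{n_k}(y_{n_k})\to f(x)\in supp(\mu)$, and the $C^1$ convergence, provided the Grassmannian limit at each point is intrinsic (independent of the chosen subsequence).

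The main obstacle is exactly this well-definedness. If two subsequences produced distinct admissible pairs $(E_1,F_1)$ and $(E_2,F_2)$ at the same $x$ with $E_1\neq E_2$, then $E_1\cap E_2=\{0\}$ in the two-dimensional $T_xM$. Decomposing a nonzero $v_2\in E_2$ in the basis $E_1\oplus F_1$ produces a nontrivial $F_1$-component, and combining this with the pointwise $(E_1,F_1)$-inequality at $x$ forces $\|Df^kv_2\|/\|Df^ku_1\|\to\infty$ for any $u_1\in E_1\setminus\{0\}$; a fully symmetric calculation with the $(E_2,F_2)$-inequality forces the reverse blow-up $\|Df^ku_1\|/\|Df^kv_2\|\to\infty$, a contradiction. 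Hence the splitting is intrinsic, and together with the preceding steps this delivers the desired $Df$-invariant dominated splitting $T_{supp(\mu)}M=E\oplus F$ with the same constants $C$ and $\lambda$, establishing the claim.
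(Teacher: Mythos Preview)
Your proposal is correct and follows essentially the same approach as the paper: use Proposition~\ref{SMC} to approximate each $x\in supp(\mu)$ by points in $\mathcal{O}rb(f_{n_k},p_{n_k})$, take Grassmannian limits of the one-dimensional bundles, and pass the uniform domination inequality to the limit via $C^1$ convergence. In fact you are more careful than the paper, which simply asserts convergence of the bases and invariance without justifying transversality or well-definedness; your uniqueness argument via the contradictory blow-up of ratios fills a gap the paper leaves open.
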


\begin{proof}(Proof of claim)
   By definition, the support set $supp(\mu)\subset M$ is a closed invariant set of $f$. Since $\mu_n\rightarrow \mu$ in the weak$^*$ topology sense, by Proposition \ref{SMC}, one obtains that $$supp(\mu)\subset \bigcap_{m\geq 1}\overline{\bigcup_{n\geq m}supp(\mu_n)}.$$ Therefore, for each $x\in supp(\mu)$, one has that $$x\in \overline{\bigcup_{n\geq m}supp(\mu_n)}, {\rm \ \ for \ any \ integer \ } m\geq 1.$$ Then, there exists an integer $m_1>0$ such that $x_{m_1}\in supp(\mu_{m_1})\bigcap B(x,1)$. Similarly, there is an integer $m_2 > m_1$ such that $x_{m_2}\in supp(\mu_{m_2})\bigcap B(x,\frac{1}{2})$. Successively, there exists an integer $m_k > m_{k-1}$ such that $$ x_{m_k}\in supp(\mu_{m_k})\bigcap B(x,\frac{1}{k}).$$ Therefore, there is a sequence of points $\{x_{m_k}: x_{m_k} {\rm \ is \ the \ periodic \ point \ of \ } f_{m_k}\}$ satisfying that $$x_{m_k}\rightarrow x,~~~f_{m_k}\rightarrow f.$$ 
    
   Since there exists a uniformly dominated splitting over $\mathcal{O}rb(f_n, p_n)$ for all $n$ and dim$M=2$, by Definition \ref{NDS}, there exist two uniform constants $C>0$ and $\lambda\in(0,1)$ such that for any $n\in\mathbb{N}^+$ and an $Df_{m_k}$-invariant splitting $T_{\mathcal{O}rb(f_{m_k}, p_{m_k})}M=E\bigoplus F$, one has that $$\frac{\parallel Df_{m_k}^n\mid_{E(x)}\parallel}{\parallel Df_{m_k}^n\mid_{F(x)}\parallel}\leqslant C\lambda^n,~\text{for any } x\in \mathcal{O}rb(f_{m_k}, p_{m_k}).$$ Denote the basis of $E(x_{m_k})$ by $\{i^{1}_{x_{m_k}}\}$, the basis of $F(x_{m_k})$ by $\{j^{1}_{x_{m_k}}\}$. Since $x_{m_k}\rightarrow x,~f_{m_k}\rightarrow f$ and the continuity of diffeomorphisms, one has that
   $$i^{1}_{x_{m_k}}\rightarrow i^{1}_{x},~~~j^{1}_{x_{m_k}}\rightarrow j^{1}_{x},$$ where $\{i^{1}_{x}\}$ and $\{j^{1}_{x}\}$ are the bases of $E(x)$ and $F(x)$ respectively. Thus $$E(x_{m_k})\rightarrow E(x),~F(x_{m_k})\rightarrow F(x),~T_{x_{m_k}}M\rightarrow T_{x}M.$$ Therefore, one can obtain a decomposition $$T_{x}M=E(x)\bigoplus F(x).$$
   
   Since $$Df_{m_k}(E(x_{m_k}))=E(f_{m_k}(x_{m_k}))~\text{and}~Df_{m_k}(F(x_{m_k}))=F(f_{m_k}(x_{m_k})),$$ one has that $$Df(E(x))=E(f(x))~\text{and}~Df(F(x))=F(f(x)).$$ Thus, the splitting $T_{x}M=E(x)\bigoplus F(x)$ is an $Df$-invariant splitting. By the continuity of diffeomorphism $f$, one has that $$\frac{\parallel Df^n\mid_{E(x)}\parallel}{\parallel Df^n\mid_{F(x)}\parallel}\leqslant C\lambda^{n}, ~\text{for any $n\in\mathbb{N}$}.$$ Since the abitrariness of $x\in supp(\mu)$, there exists a dominated splitting over $supp(\mu)$. Therefore, the claim is proved.
\end{proof}   

   For any $\varepsilon>0$, according to the variational principle (Theorem \ref{VP}) there exists an ergodic hyperbolic measure $\mu$ such that $$h_{\mu}(f)> h_{top}(f)-\frac{\varepsilon}{2}.$$ For this hyperbolic measure $\mu\in\mathcal{E}(f)$, by Theorem \ref{HHS}, there exists a horseshoe $\Lambda$ satisfying that $$h_{\mu}(f)-\frac{\varepsilon}{2}< h_{top}(f|_{\Lambda}).$$ Consequently, $$h_{top}(f)<h_{\mu}(f)+\frac{\varepsilon}{2}<h_{top}(f|_{\Lambda})+\varepsilon.$$ 
   
   By \cite[Definition 7.6, Remarks(12)]{PW}, one has that $$h_{top}(f)\geq h_{top}(f|_{\Lambda}).$$ Applying to the variational principle (Theorem \ref{VP}), one obtains that $$h_{top}(f)\geq h_{\mu}(f)>h_{top}(f)-\frac{\varepsilon}{2}.$$ Therefore, $$h_{top}(f|_{\Lambda})-\varepsilon< h_{top}(f)-\frac{\varepsilon}{2}<h_{top}(f)<h_{top}(f|_{\Lambda})+\varepsilon.$$ Thus, $$\lvert h_{top}(f|_{\Lambda})-h_{top}(f)\rvert<\varepsilon.$$ Taking $g=f$, there exists a diffeomorphism $g\in\mathcal{U}$ exhibiting a horseshoe $\Lambda$ such that $$\lvert h_{top}(g|_{\Lambda})-h_{top}(f)\rvert<\varepsilon,~\text{for any $\varepsilon>0$}.$$ This completes the proof of the Main Theorem.
\end{proof}
 
\noindent\textbf{Acknowledgements.} We would like to thank Dawei Yang for his useful suggestions. We would also like to thank Gang Liao and Lingmin Liao for their useful conversations.

\addcontentsline{toc}{section}{Reference}
\bibliographystyle{amsplain}

\noindent\text{Wanlou Wu}\\
School of Mathematical Sciences\\
Soochow University, Suzhou, 215006, P.R. China\\
wuwanlou@163.com, wanlouwu1989@gmail.com\\

\noindent\text{Jiansong Liu}\\
School of Mathematical Sciences\\	
Soochow University, Suzhou, 215006, P.R. China\\
jsliu1205@163.com, jsliu@stu.suda.edu.cn\\

\end{document}